\def\PICDIR{.}
\title{ Improved Rademacher symmetrization through a Wasserstein based measure of asymmetry }
\author{Adam B Kashlak\\Cambridge Centre for Analysis}
\begin{document}

\maketitle

\begin{abstract}
  We propose of an improved version of the ubiquitous symmetrization
  inequality making use of the Wasserstein distance between
  a measure and its reflection in order to quantify the symmetry of
  the given measure.  An empirical bound
  on this asymmetric correction term is derived through a
  bootstrap procedure and shown to give tighter results in
  practical settings than the original uncorrected inequality.
  Lastly, a wide range of applications are detailed including
  testing for data symmetry,
  constructing nonasymptotic high dimensional confidence sets,
  bounding the variance of an empirical process, 
  and improving constants in Nemirovski style inequalities
  for Banach space valued random variables.
\end{abstract}

\section{Introduction}

The symmetrization inequality is a ubiquitous result
in the probability in Banach spaces literature and 
in the concentration of measure literature.  
Dating back at least to Paul L{\'e}vy, 
it is found in the classic text of 
\cite{LEDOUXTALAGRAND1991}, Section 6.1, and the more recent
\cite{BOUCHERON2013}, Section 11.3.
\cite{GINEZINN1984} use symmetrization
in the context of empirical process theory, which is 
followed by a collection of more recent appearances such 
as 
\cite{PANCHENKO2003,KOLTCHINSKII2006,GINENICKL2010ADAPTIVE,
ARLOT2010,LOUNICINICKL2011,KERKYACHARIAN2012,FANPARTIII}.

The symmetrization inequality is as follows.  
Let $(B,\norm{\cdot})$ be a Banach space, and let $X_1,\ldots,X_n\in B$
be \iid random variables with measure $\mu$.  
Let $\veps_1,\ldots,\veps_n$ be
\iid Rademacher random variables, which are such that 
$\prob{\veps_i=1}=\prob{\veps_i=-1}=1/2$.  
These are sometimes referred to as symmetric Bernoulli
or random signs.
The symmetrization inequality is
$$
  \xv\norm*{ \frac{1}{n}\sum_{i=1}^n(X_i-\xv X_i) } \le
  2 \xv\norm*{ \frac{1}{n}\sum_{i=1}^n\veps_i(X_i-\xv X_i) }.
$$
This can be readily proved via Jensen's Inequality and the
insight that if $Z$ is a symmetric random variable, that is 
$Z\eqdist -Z$, then $Z\eqdist\veps Z$.  

The most notable
oversight of this result is that it does not incorporate 
any measure of the symmetry of the data. 
Specifically, in the extreme case that the $X_i$ are symmetric
about their mean, then the coefficient of 2 can be dropped
and the inequality becomes an equality.  
Taking note of this fact, \cite{ARLOT2010} state that
``it can be shown that this factor of 2 is unavoidable 
in general for a fixed $n$ when
the symmetry assumption is not satisfied, although it is 
unnecessary when $n$ goes to infinity.'' 
They furthermore ``conjecture that 
an inequality holds under an assumption less restrictive
than symmetry (e.g., concerning an appropriate measure of 
skewness of the distribution ).''
Hence, in response to this conjecture, 
we propose an improved symmetrization
inequality making use of Wasserstein distance and Hilbert space
geometry in order
to account for the symmetry, or lack thereof,
of the distribution of the $X_i$ under analysis.  
The main contribution of this paper is that
for some Hilbert space $H$ and $X_1,\ldots,X_n\in H$ iid random variables
with measure $\mu$,
there is for a fixed constant $C(\mu)$ depending only on the 
symmetry of the 
underlying measure $\mu$ of the $X_i$, which quantifies the 
symmetry of $\mu$, such that
$$
  \xv\norm*{ \frac{1}{n}\sum_{i=1}^n(X_i-\xv X_i) } \le
  \xv\norm*{ \frac{1}{n}\sum_{i=1}^n\veps_i(X_i-\xv X_i) }
  + \frac{C(\mu)}{n^{1/2}}.
$$
This result is detailed and proved in Section~\ref{sec:symres}.
Furthermore, an empirical bound, $C_n(\mu)$, on the constant $C$
can be calculated as is done in Section~\ref{sec:symcomp}.  
In the case that the distribution of the $X_i$ 
is symmetric, our data driven 
estimate $C_n(X)=O(n^{-1/2})$ implying an $n^{-1}$ 
rate of convergence to the desired zero for the additive 
term above.
Applications of this result 
to testing the symmetry of a data set, constructing nonasymptotic 
high dimensional
confidence sets, bounding the variance of an empirical process,
and improving coefficients in probabilistic inequalities in the 
Banach space setting
are given in Section~\ref{sec:symapps}.

%This result is detailed in 
%Section~\ref{sec:symproof} with information on its computation
%in Section~\ref{sec:symcomp} and with some applications presented
%in Section~\ref{sec:symapps}.

\section{Symmetrization}
\label{sec:symproof}

\subsection{Definitions}
\label{sec:symdef}

We first require the standard notions of Wasserstein distance
and Wasserstein space as stated below.  For a thorough introduction
to such topics, see~\cite{VILLANI2008OPTTRANS}. 

\begin{defn}[Wasserstein Distance]
  \label{def:wasser}
  Let $(\mathcal{X},d)$ be a Polish space and $p\in[1,\infty)$.
  For two probability measures $\mu$ and $\nu$ on $\mathcal{X}$,
  the Wasserstein $p$ distance is
  $$
    W_p(\mu,\nu) = \inf_{\gamma\in\Pi(\mu,\nu)} \left(
      \int_{\mathcal{X}\times\mathcal{X}} d(x,y)^p d\gamma(x,y)
    \right)^{1/p}
  $$
  where the infimum is taken over all measures $\gamma$
  on $\mathcal{X}\times\mathcal{X}$ with marginals~$\mu$
  and~$\nu$.
\end{defn}

An equivalent and useful formulation of Wasserstein distance is 
$$
    W_p(\mu,\nu) = \inf_{(X,Y)} \left(
      \xv\, d(X,Y)^p 
    \right)^{1/p}
$$
where the infimum is taken over all possible joint distributions
of $X$ and $Y$ with marginals $\mu$ and $\nu$, respectively.

\begin{defn}[Wasserstein Space]
  \label{def:wasserSpace}
  Let $P(\mathcal{X})$ be the space of probability measures on 
  $\mathcal{X}$.  The Wasserstein space is 
  $$
    P_p(\mathcal{X}) := \left\{
      \mu\in P(\mathcal{X}) \,\middle|\,
      \int_\mathcal{X} d(x_0,x)^p \mu(dx) < \infty
    \right\}
  $$
  for any arbitrary choice of $x_0$.
  This is the space of measures with finite $p$th moment.
\end{defn}

Convergence in Wasserstein space is characterized by 
weak convergence of measure and convergence in $p$th moment.
From Theorem~6.8 of \cite{VILLANI2008OPTTRANS},
convergence in Wasserstein distance is equivalent to 
weak convergence in $P_p(\mathcal{X})$.
Hence, for a sequence of measures
$\mu_n$,
$$
  W_p(\mu_n,\mu) \rightarrow 0 \text{ \iff }
  \mu_n\convd\mu \text{ and } 
  \int_{\mathcal{X}} x^p d\mu_n(x) \rightarrow
  \int_{\mathcal{X}} x^p d\mu(x).
$$

Secondly, we will make use of empirical measures
and the already mentioned Rademacher random variables.

\begin{defn}[Empirical Measure]
  For \iid random variables $X_1,\ldots,X_n$, the empirical
  measure is a random measure defined as
  $$
    \mu_n(A) := \frac{1}{n}\sum_{i=1}^n \indc{X_i\in A}
  $$
  for some measurable set $A$.  We will denote the empirical
  measure of the reflected variables $-X_1,\ldots,-X_n$ by $\mu_n^{-}$.
\end{defn}

\begin{defn}[Rademacher Distribution]
  A random variable $\veps\in\real$ has a 
  {Rademacher} distribution
  if 
  $
    \prob{\veps=1} = \prob{\veps=-1} = 1/2.
  $
  In Section~\ref{sec:symcomp}, we will also consider 
  more general Rademacher$(p)$ distributions where
  $\prob{\veps=1} = p$ and conversely $\prob{\veps=-1}=1-p$.
\end{defn}

\subsection{Symmetrization Result}
\label{sec:symres}

In the following lemma, we bound the expectation on 
the left by the sum of a ``symmetric'' term and an ``asymmetric'' term.

\begin{lm}
  \label{lem:symmet}
  Let $H$ be an Hilbert space, and
  let $X_1,\ldots,X_n\in H$ be \iid random variables
  with common law $\mu$.  Define $\mu^-$ to be the law of $-X$.
  Furthermore, let $\veps_1,\ldots,\veps_n$ be \iid 
  Rademacher random variables also independent of the 
  $X_i$.  Then, for any 1-Lipschitz function $\psi$,
  $$
    \xv \psi\left(
      \sum_{i=1}^n (X_i-\xv X_i) 
    \right) \le
    \xv \psi\left(
      \sum_{i=1}^n \veps_i( X_i-\xv X_i )
    \right) +
    \sqrt{\frac{n}{2}}W_2(\mu,\mu^-)
  $$
  where $W_2$ is the Wasserstein 2 distance.
\end{lm}
\begin{proof}
  For a Polish space $\mathcal{X}$, 
  let $\Pi(\mu,\nu)$ be the space of all product measures on 
  $\mathcal{X}\times\mathcal{X}$ with marginals $\mu$ and
  $\nu$.  
  For $\delta\in(0,1)$, let
  $\Pi_\delta(\mu,\nu)$ be the space of all product measures 
  with marginals $\mu$ and $\nu_\delta=\delta\mu+(1-\delta)\nu$.
  For $\gamma\in\Pi(\mu,\nu)$ and $\eta\in\Pi(\mu,\mu)$, the measure
  $\delta\eta + (1-\delta)\gamma\in\Pi_\delta(\mu,\nu)$.
  Hence,
  \begin{align*}
    W_p^p(\mu,\nu_\delta) 
    &= \inf_{\gamma_\delta\in\Pi(\mu,\nu_\delta)}
     \int_{\mathcal{X}\times\mathcal{X}} d(x,y)^p d\gamma_\delta(x,y) \\
    &\le \inf_{\eta\in\Pi(\mu,\mu),\,\gamma\in\Pi(\mu,\nu)}
     \int_{\mathcal{X}\times\mathcal{X}} d(x,y)^p 
     d(\delta\eta + (1-\delta)\gamma)(x,y) \\
    &= \inf_{\gamma\in{\Pi}(\mu,\nu)}
     (1-\delta)\int_{\mathcal{X}\times\mathcal{X}} d(x,y)^p 
     d\gamma(x,y) \\
    &= (1-\delta)W_p^p(\mu,\nu).
  \end{align*}
  The inequality on the second lines above arises from taking 
  the infimum over a more restrictive set.
  The law of $\veps X$ is $\frac{1}{2}(\mu+\mu^-)$. 
  Hence, for our purposes, the above
  implies that 
  $$
    W_2\left(\mu,\frac{\mu+\mu^-}{2}\right) \le 
    \frac{1}{\sqrt{2}} W_2(\mu,\mu^-).
  $$

  Define $\mu^{*n}$ to be the law of 
  $\sum_{i=1}^n (X_i-\xv X_i)$ and 
  $\tilde{\mu}^{*n}$ to be the law of 
  $\sum_{i=1}^n \veps_i(X_i-\xv X_i)$. Then,
  \begin{flalign*}
    &~~~~\xv \psi\left(
      \sum_{i=1}^n (X_i-\xv X_i) 
    \right) -
    \xv \psi\left(
      \sum_{i=1}^n \veps_i( X_i-\xv X_i )
    \right) \le&
  \end{flalign*}
  \vspace{-0.2in}
  \begin{align*}
    &\le \sup_{\norm{\phi}_{Lip}\le 1}\left\{
      \xv \phi\left(
        \sum_{i=1}^n (X_i-\xv X_i) 
      \right) -
      \xv \phi\left(
        \sum_{i=1}^n \veps_i( X_i-\xv X_i )
      \right) 
    \right\} \\
    &\le W_1\left( \mu^{*n}, \tilde{\mu}^{*n} \right) \\
    &\le W_2\left( \mu^{*n}, \tilde{\mu}^{*n} \right) \\
    &\le \sqrt{n} W_2\left( \mu, \frac{\mu+\mu}{2}^- \right)\\
    &\le \sqrt{\frac{n}{2}} W_2( \mu, \mu^- )
  \end{align*}
  where the second, third, and fourth inequality come respectively 
  from Lemmas~\ref{lem:duality}, \ref{lem:order}, and~\ref{lem:conv}
  in the appendix.
  Rearranging the terms gives the desired result.
\end{proof}

This lemma leads immediately to the following theorem.  
The intuition behind this theorem is that averaging a 
collection of random variables has an inherent smoothing
and symmetrizing effect.  Thus, as the sample size $n$ 
increases, the difference between the expectations of the
true average and the Rademacher average become negligible.

\begin{thm}
  \label{thm:symovern}
  Using the setting of Lemma~\ref{lem:symmet} 
  with either of the following two
  conditions that
  \begin{enumerate}
    \item $\psi$ is additionally positive homogeneous (e.g. a norm), or
    \item the metric $d$ is positive homogeneous 
          in the sense that for $a\in\real$,
          $d(ax,ay) = \abs{a}d(x,y)$,
  \end{enumerate}
  then
  $$
    \abs*{
    \xv \psi\left(
      \frac{1}{n}\sum_{i=1}^n (X_i-\xv X_i) 
    \right) -
    \xv \psi\left(
      \frac{1}{n}\sum_{i=1}^n \veps_i( X_i-\xv X_i )
    \right)
    } = O\left(\frac{1}{\sqrt{n}}\right).
  $$
\end{thm}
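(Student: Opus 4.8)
The plan is to deduce the theorem from Lemma~\ref{lem:symmet} by little more than bookkeeping, once two observations are recorded; throughout, write $S_n := \sum_{i=1}^n(X_i - \xv X_i)$ and $S_n' := \sum_{i=1}^n \veps_i(X_i - \xv X_i)$. The first observation is that the bound of Lemma~\ref{lem:symmet} is already two-sided. In the proof of that lemma the difference $\xv\psi(S_n) - \xv\psi(S_n')$ is dominated by $\sup_{\norm{\phi}_{Lip}\le 1}\{\xv\phi(S_n) - \xv\phi(S_n')\}$, which by Lemma~\ref{lem:duality} equals $W_1(\mu^{*n},\tilde{\mu}^{*n})$; this supremum is unchanged under swapping the two measures, and $-\psi$ is $1$-Lipschitz whenever $\psi$ is, so running the same chain of inequalities with $-\psi$ in place of $\psi$ yields the reverse bound. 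Hence
$$
  \abs*{\xv\psi(S_n) - \xv\psi(S_n')} \le \sqrt{\frac{n}{2}}\,W_2(\mu,\mu^-),
$$
which is finite whenever $W_2(\mu,\mu^-)<\infty$ (for the norm metric, simply whenever $\mu$ has a finite second moment).

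The second observation absorbs the $1/n$ normalisation. Under hypothesis (1) the map $\psi$ is positive homogeneous, so $\psi(n^{-1}S_n) = n^{-1}\psi(S_n)$ and $\psi(n^{-1}S_n') = n^{-1}\psi(S_n')$; dividing the displayed inequality by $n$ then gives
$$
  \abs*{\xv\psi\left(\frac{1}{n} S_n\right) - \xv\psi\left(\frac{1}{n} S_n'\right)} \le \frac{1}{\sqrt{2n}}\,W_2(\mu,\mu^-) = O\left(\frac{1}{\sqrt{n}}\right),
$$
with implicit constant $W_2(\mu,\mu^-)/\sqrt{2}$.

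Under hypothesis (2) the function $\psi$ need not be homogeneous, so I would instead apply Lemma~\ref{lem:symmet} to the rescaled function $\psi_n(z) := n\,\psi(z/n)$: positive homogeneity of $d$ keeps $\psi_n$ $1$-Lipschitz, since $\abs{\psi_n(z) - \psi_n(z')} = n\,\abs{\psi(z/n) - \psi(z'/n)} \le n\,d(z/n, z'/n) = d(z,z')$. Because $n^{-1}\psi_n(S_n) = \psi(n^{-1}S_n)$ and likewise for $S_n'$, dividing the bound of Lemma~\ref{lem:symmet} for $\psi_n$ by $n$ reproduces exactly the same $O(n^{-1/2})$ estimate.

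I do not anticipate a genuine obstacle: all of the probabilistic and optimal-transport content already sits inside Lemma~\ref{lem:symmet}, and the theorem is in essence the remark that replacing the sum by the average inserts the missing factor $n^{-1}$ in front of $\sqrt{n/2}\,W_2(\mu,\mu^-)$. The only points that call for a moment's attention are that the centring term $\xv X_i$ rescales linearly under $x\mapsto x/n$ (immediate, since $H$ is a vector space) and that, in case (2), the $1$-Lipschitz property must be preserved by the dilation --- which is precisely what homogeneity of $d$ guarantees, and is why the two hypotheses are posed as alternatives rather than jointly.
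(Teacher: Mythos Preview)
Your proposal is correct and matches the paper's approach: the two-sided bound is obtained by noting the symmetry of the Kantorovich--Rubinstein supremum (the paper phrases this as ``swapping'' the two sums in the proof of Lemma~\ref{lem:symmet}), and under condition~(1) both you and the paper simply divide through by $n$ using homogeneity of $\psi$. The only cosmetic difference is in condition~(2): the paper rescales the sums and recomputes $W_2$ for the laws of $n^{-1}S_n$ and $n^{-1}S_n'$ directly, whereas you rescale the test function to $\psi_n(z)=n\,\psi(z/n)$ and reapply Lemma~\ref{lem:symmet} as a black box; both arguments exploit the homogeneity of $d$ in exactly the same place, and your version is arguably the cleaner packaging.
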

\begin{proof}
  Running the 
  proof of Lemma~\ref{lem:symmet} after swapping
  $\sum_{i=1}^n (X_i-\xv X_i)$ and $\sum_{i=1}^n \veps_i( X_i-\xv X_i )$
  gives the lower deviation
  $$
    \xv \psi\left(
      \sum_{i=1}^n (X_i-\xv X_i) 
    \right) \ge
    \xv \psi\left(
      \sum_{i=1}^n \veps_i( X_i-\xv X_i )
    \right) -
    \sqrt{\frac{n}{2}}W_2(\mu,\mu^-).
  $$
  Under condition~1, the result is immediate. 
 
  Under condition~2,
  let $\mu$ be the law of $(X_i-\xv X_i)$ as before.
  Then, redefining
  $\mu^{*n}$ to be the law of 
  $\sum_{i=1}^n \frac{1}{n}(X_i-\xv X_i)$ and 
  $\tilde{\mu}^{*n}$ to be the law of 
  $\sum_{i=1}^n \frac{1}{n}\veps_i(X_i-\xv X_i)$ results in
  \begin{align*}
    W_2(\mu^{*n},\tilde{\mu}^{*n}) 
    &\le \sqrt{n}\inf_{(X,Y)}\left(
      \xv\, d(X/n,Y/n)^2
    \right)^{1/2} \\
    &= \frac{1}{\sqrt{2n}}W_2(\mu,\mu^-)
  \end{align*}
  where the infimum is taken over all joint distributions 
  of $X$ and $Y$ with marginals $\mu$ and $\frac{\mu+\mu^-}{2}$,
  respectively.  The desired result follows.
\end{proof}

%\begin{rk}
%  Perhaps the rate of convergence can be quantified.
%  I have yet to read del Barrio, Gin{\'e}, and 
%  Matr{\'a}n~\cite{BARRIOGINEMATRAN1999} who, based on their
%  abstract, may discuss this point.
%\end{rk}
%\begin{rk}
%  Given observations $X_1,\ldots,X_n$, one could presumably
%  estimate $W_2(\mu_n,\mu_n^-)$ via some bootstrap resampling
%  method.
%\end{rk}

%\section{Computation}
%\label{sec:symcomp}

\section{Empirical estimate of $W_2(\mu,\mu^-)$}
\label{sec:symcomp}

In order to explicitly make use of the above results,
an empirical estimate of $W_2(\mu,\mu^-)$ is required.
We first establish the following bound.
\begin{prop}
  \label{prp:empBound}
  Let $X_1,\ldots,X_n$ be iid with law $\mu$ and 
  let $Y_1,\ldots,Y_n$ be iid with law $\nu$.  Furthermore, 
  let $\mu_n$ and $\nu_n$ be the empirical distributions 
  of $\mu$ and $\nu$, respectively.  Then,
  $$
    W_p^p(\mu,\nu) \le \xv W_p^p(\mu_n,\nu_n).
  $$
\end{prop}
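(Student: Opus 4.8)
The plan is to manufacture an admissible coupling of $\mu$ and $\nu$ by \emph{averaging} optimal couplings of the empirical measures over the randomness of the samples, and then to push the expectation through the transport-cost integral by Tonelli's theorem. Throughout, fix a realization of $X_1,\dots,X_n\iid\mu$ and $Y_1,\dots,Y_n\iid\nu$ drawn independently of one another.

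First I would reduce the empirical transport problem to a matching problem. Since $\mu_n$ and $\nu_n$ are uniform over $n$ atoms, the set of couplings of $\mu_n$ and $\nu_n$ is the image under scaling by $1/n$ of the doubly stochastic matrices, whose extreme points are permutation matrices (Birkhoff--von Neumann); as the transport cost is linear in the plan, the minimum is attained at such an extreme point, so
$$
  W_p^p(\mu_n,\nu_n) = \min_{\sigma\in S_n}\frac1n\sum_{i=1}^n d\bigl(X_i,Y_{\sigma(i)}\bigr)^p,
$$
with an optimal plan given by $\gamma_n := \frac1n\sum_{i=1}^n \delta_{(X_i,\,Y_{\sigma^\star(i)})}$ for any minimizing $\sigma^\star$. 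To make this measurable in the sample, note that $S_n$ is finite and that for each fixed $\sigma$ the cost above is a measurable function of $(X_1,\dots,Y_n)$; taking $\sigma^\star$ to be the first permutation in a fixed enumeration of $S_n$ that attains the minimum renders $\sigma^\star$, and hence the measure-valued map $\omega\mapsto\gamma_n(\omega)$ (in the sense that $\omega\mapsto\gamma_n(\omega)(A)$ is measurable for every Borel $A\subseteq\mathcal X\times\mathcal X$), measurable.

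Next I would average and identify the marginals. Define $\bar\gamma(A):=\xv[\gamma_n(A)]$ for Borel $A\subseteq\mathcal X\times\mathcal X$; countable additivity of $\bar\gamma$ follows from monotone convergence applied to partial sums, and $\bar\gamma(\mathcal X\times\mathcal X)=1$, so $\bar\gamma$ is a Borel probability measure. Its first marginal is $\bar\gamma(B\times\mathcal X)=\xv[\mu_n(B)]=\frac1n\sum_{i=1}^n\prob{X_i\in B}=\mu(B)$. For the second marginal, since $\sigma^\star$ merely permutes indices we have $\frac1n\sum_{i=1}^n\delta_{Y_{\sigma^\star(i)}}=\frac1n\sum_{j=1}^n\delta_{Y_j}=\nu_n$, whence $\bar\gamma(\mathcal X\times C)=\xv[\nu_n(C)]=\nu(C)$. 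Thus $\bar\gamma\in\Pi(\mu,\nu)$, and by Definition~\ref{def:wasser} together with Tonelli's theorem (the integrand is nonnegative),
$$
  W_p^p(\mu,\nu)\le\int_{\mathcal X\times\mathcal X} d(x,y)^p\,d\bar\gamma(x,y)
  = \xv\int_{\mathcal X\times\mathcal X} d(x,y)^p\,d\gamma_n(x,y)
  = \xv W_p^p(\mu_n,\nu_n),
$$
the last step because $\gamma_n$ is an optimal plan for $(\mu_n,\nu_n)$.

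The only genuinely delicate point is the measurability of the selection $\omega\mapsto\gamma_n(\omega)$ and the verification that $\bar\gamma$ is a bona fide probability measure with the asserted marginals; once the finiteness of $S_n$ is exploited, both are routine, and the Tonelli interchange is unproblematic. If one wished to avoid even the measurable-selection step, an equivalent route is to fix $\epsilon>0$, pick for each $\omega$ a (not necessarily optimal) plan of cost at most $W_p^p(\mu_n,\nu_n)+\epsilon$ via a measurable $\epsilon$-argmin over $S_n$, run the same averaging argument to get $W_p^p(\mu,\nu)\le\xv W_p^p(\mu_n,\nu_n)+\epsilon$, and let $\epsilon\downarrow0$; for uniform empirical measures this refinement is unnecessary since the exact minimizer is already measurable.
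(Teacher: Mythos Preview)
Your argument is correct and follows essentially the same route as the paper: reduce $W_p^p(\mu_n,\nu_n)$ to a minimum over permutations, then manufacture a coupling of $(\mu,\nu)$ from the optimally matched pairs and read off the bound. Your version is more explicit about measurability of the selection and about why the second marginal of the averaged plan is $\nu$ (via the permutation-invariance of the empirical measure), points the paper's proof leaves implicit, but the underlying idea is the same.
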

\begin{proof}
  The following infima are taken over the
  joint distributions of the random variables in question.
  Let $X$ and $Y$ be random variables of law $\mu$ and $\nu$,
  respectively.  Also, let $S_n$ be the group of permutations
  on $n$ elements.
  \begin{align*}
    W_p^p(\mu,\nu)
    &= \inf_{(X,Y)} \xv d(X,Y)^p \\
    &= \inf_{(X_1,\ldots,X_n,Y_1,\ldots,Y_n)}\xv\left\{
      \frac{1}{n}\sum_{i=1}^n d(X_i,Y_i)^p
    \right\}\\
    &\le \xv\min_{\rho\in S_n}\left\{
      \frac{1}{n}\sum_{i=1}^n d(X_i,Y_{\rho(i)})^p
    \right\}\\
    &= \xv W_p^p(\mu_n,\nu_n)
  \end{align*}
  where the above inequality arises by replacing the infimum over
  all possible joint distributions of the $X_i$ and $Y_i$ 
  with a specific joint distribution.
\end{proof}

The following subsections establish that it is reasonable to
replace $W_2(\mu,\mu^-)$ with a data driven estimate of 
$\xv W_2(\mu_n,\mu_n^-)$ in Lemma~\ref{lem:symmet} 
and Theorem~\ref{thm:symovern}. 
Rates of convergence of $W_2(\mu_n,\mu_n^-)$ are presented, 
and a bootstrap estimator for $\xv W_2(\mu_n,\mu_n^-)$ is
proposed and tested numerically.

\subsection{Rate of convergence of empirical estimate}

As $W_p(\cdot,\cdot)$ is a metric, 
the triangle inequality implies that
\begin{align*}
  W_p(\mu,\mu^-) 
  &\le W_p(\mu,\mu_n) + W_p(\mu_n,\mu_n^-) + W_p(\mu_n^-,\mu^-)\\
  &\le 2W_p(\mu,\mu_n) + W_p(\mu_n,\mu_n^-),
\end{align*}
and therefore,
$$
\abs{W_p(\mu,\mu^-)-W_p(\mu_n,\mu_n^-)} \le 2W_p(\mu,\mu_n).
$$
By Lemma~\ref{lem:converge}, $W_p(\mu,\mu_n)\rightarrow0$ with
probability one making the discrepancy negligible for large data sets.
However, it is also possible 
to get a hard upper bound on this term;
specifically,  the recent work of \cite{FOURNIER2015}
proposes explicit moment bounds on $W_p(\mu,\mu_n)$.
Their result can be used to demonstrate the 
speed with which our empirical measure of asymmetry, 
$W_2(\mu_n,\mu_n^-)$, converges to zero 
when $\mu$ is symmetric.

  In the case that $\mu$ is symmetric, $W_2(\mu,\mu^-)=0$,
  the ideal correction term is equal to zero.
  This implies that our empirical bound 
  $$
    W_2(\mu_n,\mu_n^-)=\abs*{W_2(\mu,\mu^-)-W_2(\mu_n,\mu_n^-)}
    \le 2W_2(\mu,\mu_n).
  $$
  Therefore, the moment bound from Theorem~1 of \cite{FOURNIER2015}
  implies that $W_2(\mu_n,\mu_n^-) = O(n^{-1/2 - \delta})$ where
  $\delta\in(0,0.5]$ depending on the specific moment used and the 
  dimensionality of the measure.
  Thus, the empirical Wasserstein distance 
  achieves a faster convergence rate in the symmetric case than the 
  general rate of $n^{-1/2}$.

The tightness of the bounds proposed in \cite{FOURNIER2015} was
tested experimentally.  While the moment bounds are certainly of 
theoretical interest, implementing these bounds
resulted in an inequality less sharp than the original
symmetrization inequality.
However, the bootstrap procedure detailed in the following section
does produce a practically useful estimate of the expected 
empirical Wasserstein distance.

\subsection{Bootstrap Estimator}

We propose a bootstrap procedure to 
estimate  
the expected Wasserstein distance between the
empirical measure and its reflection, $\xv W_2(\mu_n,\mu_n^-)$.
Given observations $x_1,\ldots,x_n$, let
$\hat{\mu}_n$ be the empirical measure of
the data.  Then, for some specified $m$, 
two sets $Y_1,\ldots,Y_m$ and $Z_1,\ldots,Z_m$
can be sampled as independent draws from $\hat{\mu}_n$.  
The goal is to move a mass of $1/m$ from each of the $Y_i$
to each of the negated $-Z_i$ in an optimal fashion.
Hence, the $m\times m$
matrix of pairwise distances is constructed with entries
$A_{i,j} = d(Y_i,-Z_j)$, which can be accomplished
in $O(m^2)$  time.
From here, the problem reduces to a \textit{linear assignment problem},
a specific instantiation of a \textit{Minimum-cost flow problem} 
from linear programming
\citep{AHUJA1993NETWORKFLOWS}.
That is, given a complete bipartite graph with vertices 
$L\cup R$ such that $|L|=|R|=m$ and with weighted edges,
we wish to construct a perfect matching minimizing the total
sum of the edge weights. Here, the weights are the pairwise distances
$A_{i,j}$.  This linear program can be efficiently solved in 
$O(m^3)$ time via the 
\textit{Hungarian algorithm}~\citep{KUHN1955HUNGARIAN}.  For more
on linear programs in the probabilistic setting, see
\cite{STEELE1997}.

This estimated distance can be averaged over multiple bootstrapped
samples.  Though, in general, only a few replications are necessary
to achieve a stable estimate
as the bootstrap estimator has a very small variance.  Indeed, 
to see this, consider
the bounded difference inequality detailed in Section 3.2 of
\cite{BOUCHERON2013}, which is a direct corollary of the 
Efron-Stein-Steele inequality 
\citep{EFRONSTEIN1981,STEELE1986,RHEETALAGRAND1986}.
\begin{defn}[A function of bounded differences]
  For $\mathcal{X}$ some measurable space and 
  a real valued function $f:\mathcal{X}^n\rightarrow \real$,
  $f$ is said to have the bounded differences property if
  for all $i=1,\ldots,n$,
  $$
    \sup_{x_1,\ldots,x_n,x_i'}\abs{
      f(x_1,\ldots,x_n) - f(x_1,\ldots,x_i',\ldots,x_n)
    } \le c_i.
  $$
\end{defn}

\begin{prop}[Corollary 3.2 of \cite{BOUCHERON2013}]
  If $f$ has the bounded differences property with constants
  $c_1,\ldots,c_n$, then 
  $\var{f(X_1,\ldots,X_n)}\le\frac{1}{4}\sum_{i=1}^nc_i^2$.
\end{prop}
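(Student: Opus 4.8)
The plan is to deduce the bound from the Efron--Stein--Steele inequality, already invoked in the paper, used in its conditional-variance form. Write $Z = f(X_1,\ldots,X_n)$ and, for each $i$, let $V_i$ denote the variance of $Z$ taken only over the randomness in $X_i$, with all the other coordinates held fixed; the Efron--Stein--Steele inequality then reads $\var{Z} \le \sum_{i=1}^n \xv V_i$. It therefore suffices to prove the pointwise bound $V_i \le c_i^2/4$ almost surely for each $i$, since then $\xv V_i \le c_i^2/4$ and summing over $i$ yields $\var{Z} \le \frac14\sum_{i=1}^n c_i^2$.

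The key ingredient is the Popoviciu-type variance estimate: if a random variable $W$ takes values almost surely in an interval of length $\ell$, say $[a,a+\ell]$, then $\var{W} \le \ell^2/4$. This is immediate because the variance is the minimal mean square deviation of $W$ about a constant, so
$$
  \var{W} = \xv(W - \xv W)^2 \le \xv\left( W - a - \frac{\ell}{2} \right)^2 \le \frac{\ell^2}{4},
$$
the last step using that $\abs{W - a - \ell/2} \le \ell/2$ holds pointwise.

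To apply this, fix $i$ and condition on the coordinates $(X_j)_{j\ne i}$. Since $X_i$ is independent of the remaining coordinates, under this conditioning $Z$ is a measurable function of $X_i$ alone. The bounded differences hypothesis says precisely that, for any fixed values of the other coordinates, the oscillation of $f$ in its $i$th argument is at most $c_i$, i.e. the supremum of $f$ over $x_i$ minus the infimum over $x_i$ is at most $c_i$; hence the conditional law of $Z$ is supported in an interval of length at most $c_i$. The Popoviciu estimate then gives $V_i \le c_i^2/4$ for every realization of $(X_j)_{j\ne i}$, and substituting into the Efron--Stein--Steele inequality completes the argument.

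The argument is mostly routine, but there is one point worth flagging as the main obstacle to getting the claimed constant. A naive route — bounding $\abs{Z - Z_i'} \le c_i$, where $Z_i'$ replaces $X_i$ by an independent copy, and using $\var{Z} \le \tfrac12\sum_i \xv(Z - Z_i')^2$ — yields only the constant $1/2$. Recovering the sharp $1/4$ requires passing through the conditional variances $V_i$ and invoking the Popoviciu bound; the delicate step is therefore verifying that $c_i$ controls the full range of $f$ along the $i$th coordinate, which is exactly what lets the Popoviciu inequality be applied to the conditional distribution of $Z$.
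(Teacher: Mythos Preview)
Your argument is correct. Note, however, that the paper does not actually supply a proof of this proposition: it is quoted verbatim as Corollary~3.2 of \cite{BOUCHERON2013}, with only the remark that it is ``a direct corollary of the Efron--Stein--Steele inequality.'' Your derivation is precisely the standard one behind that citation---Efron--Stein in conditional-variance form together with the Popoviciu range bound to secure the sharp constant $1/4$---so there is nothing to compare beyond saying that you have filled in exactly the proof the paper points to.
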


In our setting, $Y_i$ and $Z_i$ for $i=1,\ldots,m$ are 
independent random variables with law $\hat{\mu}_n$.  
The function $f(Y_1,\ldots,Y_m,Z_1,\ldots,Z_m)$ is the 
value of the optimal matching from the $\{Y_i\}$ to the $\{-Z_i\}$.
This $f$ is, in fact, a function of bounded differences, because 
modifying a single argument
will at most change the optimal value by 
$
  c = m^{-1}(
    \max_{i,j=1,\ldots,n}\{d(x_i,-x_j)\} -
    \min_{i,j=1,\ldots,n}\{d(x_i,-x_j)\}
  ) = C/m.
$
Thus, from the bounded differences theorem,
$$
  \var{f(Y_1,\ldots,Y_m,Z_1,\ldots,Z_m)} \le
  \frac{C^2n}{4m^2}.
$$
Therefore, if $m$ is chosen to be of order $n$, as in the 
numerical experiments below, then 
the variance of the bootstrap estimate decays at 
rate of $O(n^{-1})$.

The proposed bootstrap procedure was experimentally tested 
on both high dimensional 
Rademacher and Gaussian data as will be seen in 
Sections~\ref{sec:numRad} and~\ref{sec:numGauss}.  For each replication, 
the observed data was
randomly split in half.  That is, given a random permutation
$\rho \in S_n$, the symmetric group on $n$ elements, 
the Hungarian algorithm was run to calculate the cost of an 
optimal perfect matching
between $\{X_{\rho(1)},\ldots,X_{\rho(\frac{n}{2})}\}$ and
$\{-X_{\rho(\frac{n}{2}+1)},\ldots,-X_{\rho(n)}\}$.

\subsection{Numerical Experiments}

From Proposition~\ref{prp:empBound}, there is an obvious 
positive bias 
in our new symmetrization inequality when using 
the Wasserstein
distance between the empirical measures, $W_2(\mu_n,\mu_n^-)$, 
in lieu of the Wasserstein distance between the 
unknown underlying measures, $W_2(\mu,\mu^-)$.
This is specifically troublesome when $\mu$ is 
symmetric or nearly symmetric.  That is,
if $W_2(\mu,\mu^-)=0$,
then barring trivial cases, 
the distance between the empirical measures will be positive with positive
probability.
However, as stated in Lemma~\ref{lem:converge}, 
$W_2(\mu_n,\mu_n^-)\rightarrow 0$ with probability one,
which will still make this approach superior to the standard
symmetrization inequality.
In the following subsections, we will compare the magnitude of the 
expected symmetrized sum and the asymmetric correction term,
which are, respectively,
$$
  R_n = n^{-1/2}\xv\norm*{ \sum_{i=1}^n \veps_i(X_i-\xv X_i) }
  ~~\text{ and  }~~
  C_n = W_2(\mu_n,\mu_n^-)/\sqrt{2}.
$$
The goal is to demonstrate through numerical simulations
that the latter is smaller than the former and thus that
newly proposed
$R_n+C_n$ is 
a sharper upper bound than the original $2R_n$ for 
$n^{-1/2}\xv\norm*{ \sum_{i=1}^n (X_i-\xv X_i) }$.

\subsubsection{Rademacher Data}
\label{sec:numRad}

For a dimension $k$ and a sample size 
$n=\{2,4,8,\ldots,256\}$, 
the data for this first
numerical test was generated from a multivariate symmetric Rademacher
distribution.  That is,
for a size $n$ iid sample from this distribution, $X_1,\ldots,X_n$,
let $X_{i,j}$ be the $j$th entry of the $i$th random variable 
with $X_{i,1},\ldots,X_{i,k}$ iid Rademacher$(1/2)$ random variables.
Across 10,000 replications, random samples were drawn and
used to estimate the expected Rademacher average, $R_n$, and the
expected empirical Wasserstein distance, $C_n$, under the $\ell_1$-norm. 
The dimensions 
considered were $k=\{2,20,200\}$.  The results are displayed
on the left column of Figure~\ref{fig:simByN}.  As the sample size $n$
increases with respect to $k$, we get closer to an asymptotic 
state and the bound based on the empirical Wasserstein distance
becomes more attractive.

\subsubsection{Gaussian Data}
\label{sec:numGauss}

For a dimension $k$ and a sample size 
$n=\{2,4,8,\ldots,256\}$, 
the data for this second
numerical test was generated from a multivariate Gaussian mixture
distribution.  
Specifically, 
$
  \frac{1}{2}\distNormal{{\bf -1}}{I_k} + 
  \frac{1}{2}\distNormal{{\bf  1}}{I_k},  
$
which is a symmetric distribution.
Over 10,000 replications, random samples were drawn and
used to estimate the expected Rademacher average, $R_n$, and the
expected empirical Wasserstein distance, $C_n$, under the $\ell_2$-norm. 
The dimensions 
considered were $k=\{2,20,200\}$.  The results are displayed
on the right column of Figure~\ref{fig:simByN}.  Similarly to the 
multivariate Rademacher setting, as the sample size $n$
increases, the bound based on the empirical Wasserstein distance
becomes sharper than the original symmetrization bound.

\begin{figure}
  \centering
  \includegraphics[width=\textwidth]{\PICDIR/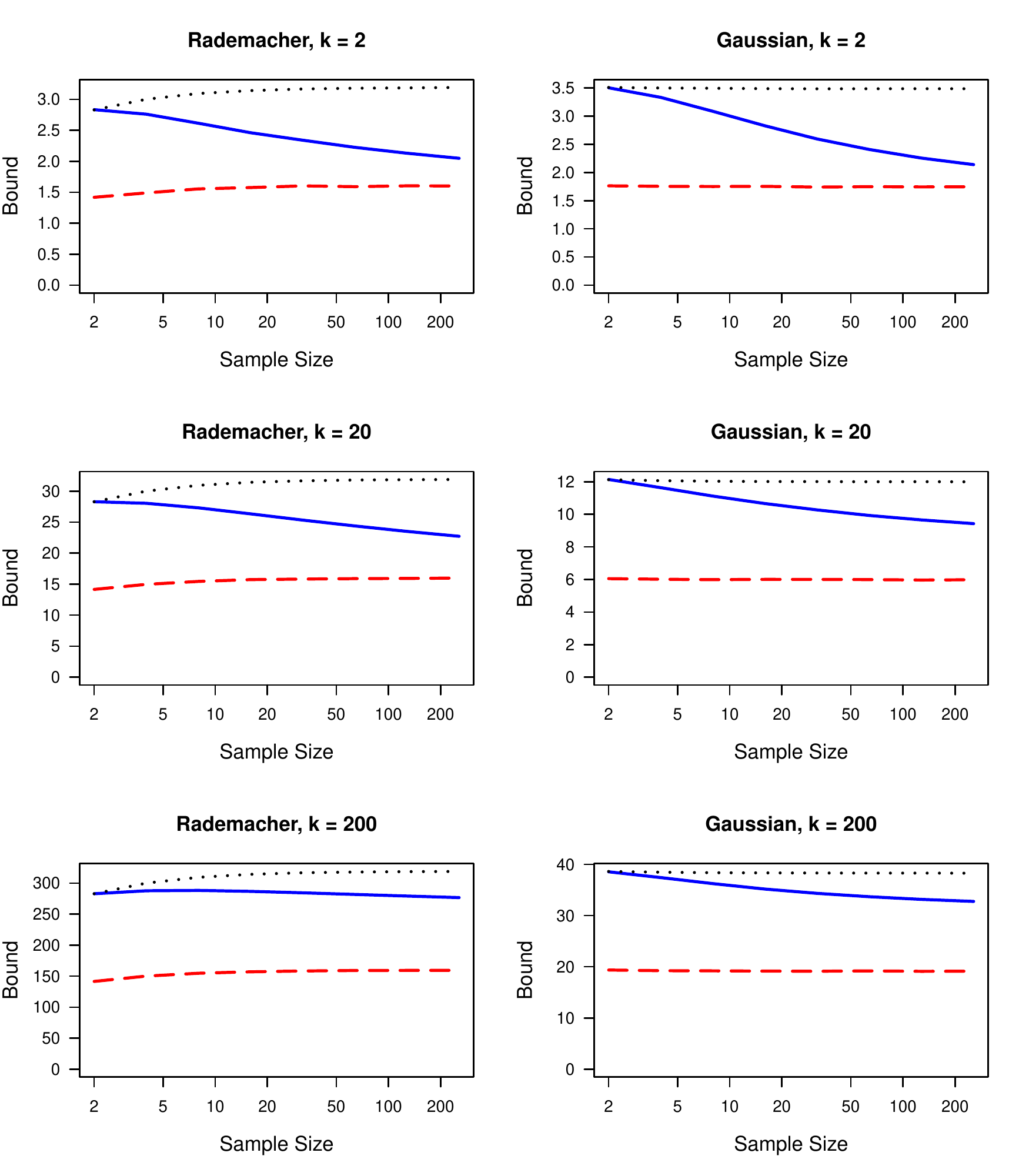}
  \capt{
    \label{fig:simByN}
    For multivariate Rademacher (left) and Gaussian mixture (right)
    data, the average
    $n^{-1/2}\xv\norm{\sum_{i=1}^n(X_i-\xv X_i)}$ (red dashed lines), 
    twice the Rademacher average 
    $2R_n = 2n^{-1/2}\xv\norm{\sum_{i=1}^n\veps_i(X_i-\xv X_i)}$ 
    (black dotted lines), and the bound using the 
    scaled empirical Wasserstein distance, 
    $R_n + W_2(\mu_n,\mu_n^-)/\sqrt{2}$ (blue solid lines) were 
    estimated over 10,000 replications.  The dimension of the
    data is $k=\{2,20,200\}$.  For the Rademacher setting, the
    $\ell_1$-norm was used.  For the Gaussian setting, the 
    $\ell_2$-norm was used.  As the sample size increases,
    the Wasserstein term converges to zero thus sharpening 
    the upper bound.
  }
\end{figure}

\section{Applications}
\label{sec:symapps}

In the following subsections, a collection of applications 
of the improved symmetrization inequality are detailed.
These include a test for data symmetry, the construction
of nonasymptotic high dimensional confidence sets, bounding the variance
of an empirical process, and Nemirovski's inequality for
Banach space valued random variables.

\subsection{Permutation test for data symmetry}

In the previous sections, we proposed the Wasserstein distance 
$W_2(\mu,\mu^-)$ to quantify the symmetry of a measure $\mu$.
Now, given $n$ iid observations $X_1,\ldots,X_n$ with common measure
$\mu$, we propose a procedure to test for whether or not $\mu$
is symmetric.
The bootstrap approach from Section~\ref{sec:symcomp}
for estimating the empirical Wasserstein distance is applied, 
and a permutation test is applied to the bootstrapped sample.
Note that while the Wasserstein-2 metric is specifically used
in our improved symmetrization inequality, for this test,
any Wasserstein-$p$ metric can be utilized as is done in the
numerical simulations below.

The bootstrap-permutation test proceeds as follows:
\begin{enumerate}
  \setcounter{enumi}{-1}
  \item 
  Choose a number $r$ of bootstrap replications to perform.
  \item 
  For each bootstrap replication, permute the data
  by some uniformly randomly drawn 
  $\rho \in S_n$, the symmetric group on $n$ elements.
  \item 
  Use the Hungarian algorithm to compute the optimal
  assignment cost, $\omega_0$, between the data sets 
  $\{X_{\rho(1)},\ldots,X_{\rho(n/2)}\}$
  and $\{-X_{\rho(n/2+1)},\ldots,-X_{\rho(n)}\}$.
  \item 
  Denote this new half-negated data set $Y$ where $Y_i = X_{\rho(i)}$
  for $i\le n/2$ and $Y_i = -X_{\rho(i)}$ for $i>n/2$.
  \item
  Draw $m$ random permutations $\rho_1,\ldots\rho_m \in S_n$. 
  For each $\rho_i$, compute $\omega_i$, the optimal 
  assignment cost between 
  $\{Y_{\rho_i(1)},\ldots,Y_{\rho_i(n/2)}\}$
  and $\{Y_{\rho_i(n/2+1)},\ldots,Y_{\rho_i(n)}\}$.
  \item 
  Return the p-value, $p_j = \#\{\omega_i > \omega_0 \}/m$.
  \item 
  Average the $r$ p-values to get an overall p-value,
  $p = r^{-1}\sum_{j=1}^r p_j$.
\end{enumerate}

Note that for very large data sets, it may be computationally
impractical to find a perfect matching between two sets of
$n/2$ nodes as performing this test as stated has a computational
complexity of order $O(mn^3)$.  In that case, randomly draw $n'<n$ 
elements from
the data set in step~1, draw a $\rho\in S_{n'}$, and proceed as before.

This permutation test was applied to simulated 
multivariate Rademacher data in $\real^5$.  
For sample sizes $n=10$ and $n=100$, let $X_1,\ldots,X_n$
be iid multivariate Rademacher$(p)$ random variables 
where each $X_i$ is comprised of a vector of independent
univariate Rademacher$(p)$ random variables.  For values
of $p \in [0.5,0.8]$, the power of this test was experimentally
computed over 1000 simulations.  The results are displayed
in Figure~\ref{fig:symTestRad}.  For the $\ell^1$ and $\ell^2$ metrics
and Wasserstein distances $W_1$ and $W_2$, the performances 
of the permutation test were comparable except for the $(\ell^2,W_2)$
case, which performed poorer in both the large and small sample size
settings.  For the large sample size, $n=100$, Mardia's test 
for multivariate skewness \citep{MARDIA1970,MARDIA1974} was 
included, which uses the result that
$$
  \frac{6}{n}\sum_{i=1}^n\sum_{j=1}^n 
  \left[
    \TT{(X_i-\bar{X})}\hat{\Sigma}^{-1}(X_j-\bar{X})
  \right]^3 
  \convd
  \distChiSquared{k(k+1)(k+2)/6}
$$
where $\hat{\Sigma}$ is the empirical covariance matrix of 
the data.
However, 
this is shown to be less powerful than the proposed permutation test.
Furthermore, as this test is asymptotic in design, 
it gave erroneous results in the
$n=10$ case and was thus excluded from the figure.

\begin{figure}
  \centering
  \includegraphics[width=0.49\textwidth]{\PICDIR/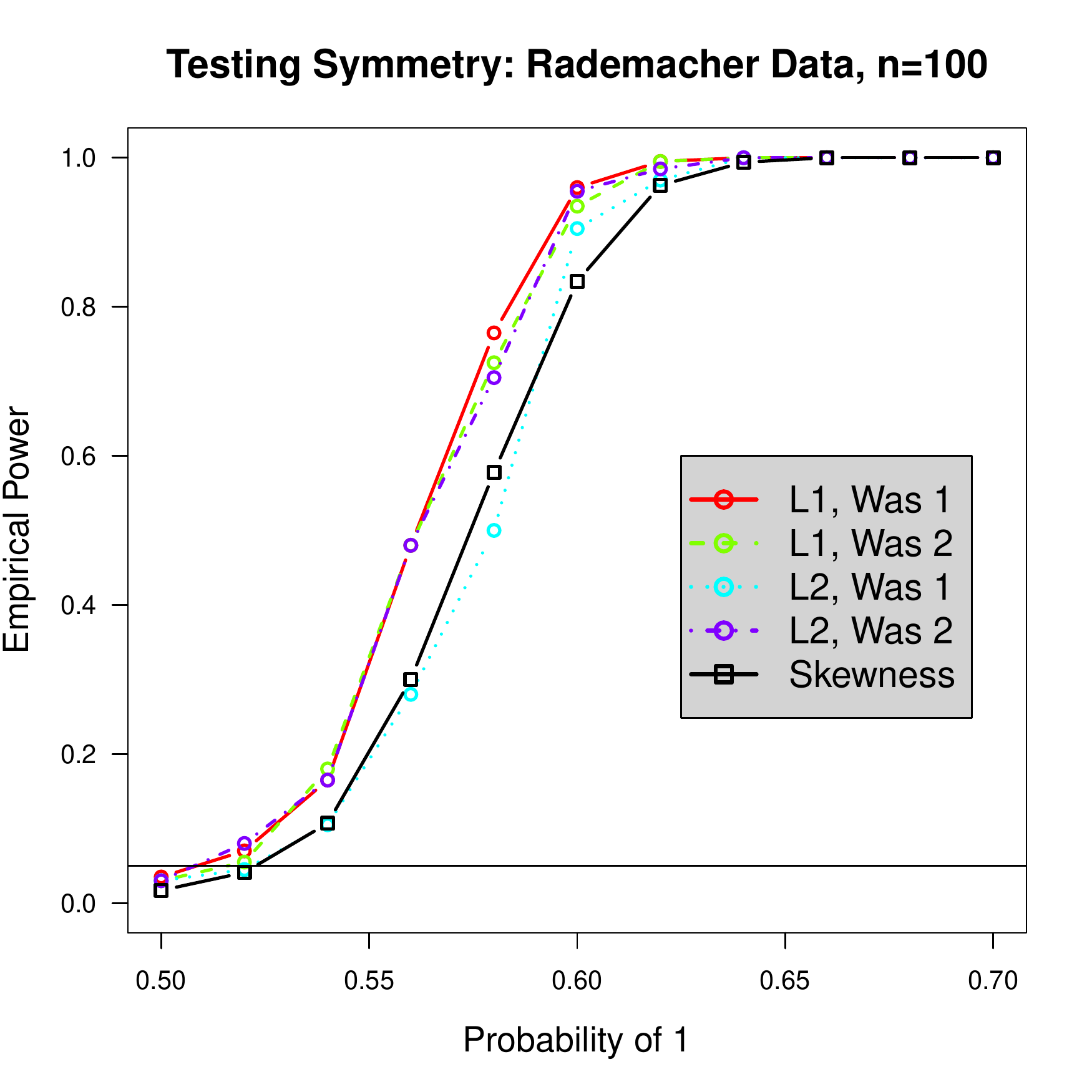}
  \includegraphics[width=0.49\textwidth]{\PICDIR/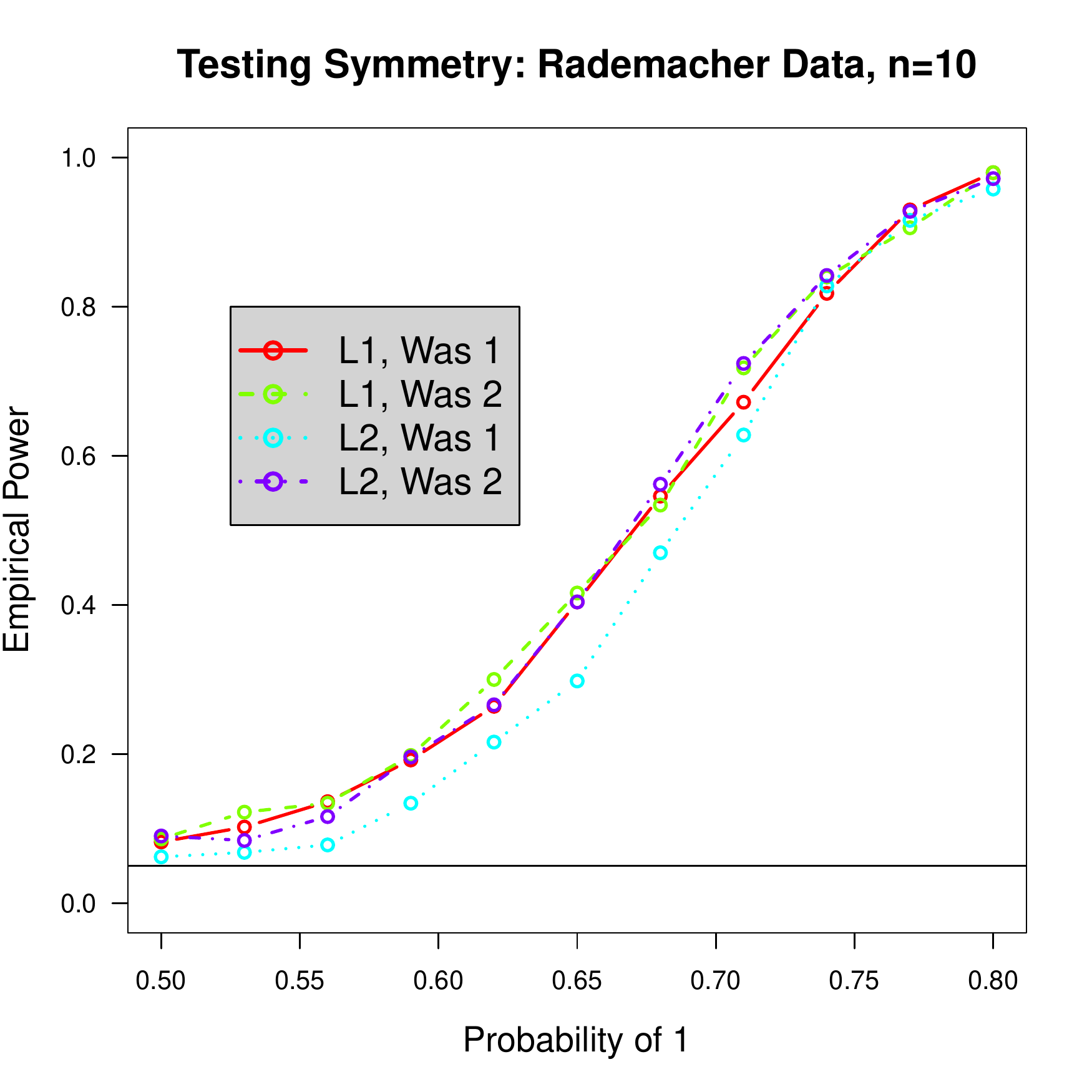}
  \capt{
    \label{fig:symTestRad}
    For data in $\real^5$, the $\ell^1$ and $\ell^2$ metrics, 
    and the Wasserstein distances
    $W_1$ and $W_2$, the experimentally computed power of the
    permutation test is plotted for Rademacher$(p)$ data as $p$, the
    probability of 1, increases thus skewing the distribution.
    The sample size is $n=100$ on the left plot and is $n=10$
    on the right plot.  The $n=100$ case includes an asymptotic
    test for skewness.  This test fails in the nonasymptotic $n=10$ case 
    and thus is not included.
  }
\end{figure}

\subsection{High dimensional confidence sets}
A method for constructing 
nonasymptotic confidence regions for high dimensional data
using a generalized bootstrap procedure was proposed in
the article of \cite{ARLOT2010}.
Beginning with a sample of \iid $Y_1,\ldots,Y_n\in\real^K$
and
the assumptions that the $Y_i$ are 
symmetric about their mean, i.e. $Y_i-\mu\eqdist \mu-Y_i$,
and are bounded in $L_p$-norm, i.e. $\norm{Y_i-\mu}_p\le M$,
they prove, among many other results, that for some 
fixed $\alpha\in(0,1)$, the following holds with probability
$1-\alpha$:
$$
  \phi\left(\bar{Y}-\mu\right) \le \left(\frac{n}{n-1}\right)
  \xv_\veps \phi\left(
    \frac{1}{n}\sum_{i=1}^n \veps_i(Y_i-\bar{Y})
  \right) +
  \frac{2M}{\sqrt{n}}\sqrt{\log(1/\alpha)}
$$
where $\phi:\real^K\rightarrow\real$ is a function that 
is subadditive, positive homogeneous, and bounded by $L_p$-norm.
By substituting our Theorem~\ref{thm:symovern} for their
Proposition~2.4 allows us to drop the symmetry condition 
and achieve a more general $(1-\alpha)$ confidence region.
\begin{prop}
  For a fixed $\alpha\in(0,1)$ and $p\in[1,\infty]$, let
  $\phi:\real^k\rightarrow\real$ be subadditive, positive
  homogeneous, and bounded in $L_p-$norm.  Then, for some
  $M>0$, the following holds with probability at least $1-\alpha$.
  %\begin{multline*}
  $$
    \phi\left(\bar{Y}-\mu\right) \le 
    \xv_\veps \phi\left(
      \frac{1}{n}\sum_{i=1}^n \veps_i(Y_i-\bar{Y})
    \right) +%\\+
    (2n)^{-1/2}\left(
      {2\sqrt{2}M}\sqrt{\log(1/\alpha)} +
      W_2(\mu,\mu^-) 
    \right).
  $$
  %\end{multline*}
\end{prop}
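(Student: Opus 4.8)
The plan is to reproduce the proof of the corresponding confidence bound of \cite{ARLOT2010} essentially unchanged, modifying only the single place where their argument uses symmetry of $\mu$, namely their symmetrization inequality (Proposition~2.4), and replacing it there by Theorem~\ref{thm:symovern} of the present paper, which dispenses with the symmetry hypothesis at the price of the additive term $(2n)^{-1/2}W_2(\mu,\mu^-)$. Every part of their argument that produces the deviation term is kept verbatim; this is why that term appears in the statement as $(2n)^{-1/2}\cdot 2\sqrt{2}\,M\sqrt{\log(1/\alpha)}$, that is, exactly the $2Mn^{-1/2}\sqrt{\log(1/\alpha)}$ of \cite{ARLOT2010}, merely rewritten so as to share the prefactor $(2n)^{-1/2}$ with the new Wasserstein term.

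The first ingredient is a bounded-differences estimate. Since $\phi$ is subadditive, positive homogeneous and dominated by $\norm{\cdot}_p$, replacing one argument in $(y_1,\ldots,y_n)\mapsto\phi\bigl(\tfrac1n\sum_{i=1}^n(y_i-\mu)\bigr)$ changes its value by at most $\tfrac1n\phi(y_i-y_i')\le\tfrac1n\norm{y_i-y_i'}_p\le 2M/n$, using $\norm{Y_i-\mu}_p\le M$; the analogous functional built from the empirical Rademacher average has coordinate differences of the same order. Applying Corollary~3.2 of \cite{BOUCHERON2013} (the bounded-differences inequality recalled above), exactly as in \cite{ARLOT2010}, to a single combined functional of $(Y_1,\ldots,Y_n)$, for instance $\phi(\bar{Y}-\mu)-\xv_\veps\phi\bigl(\tfrac1n\sum_i\veps_i(Y_i-\bar{Y})\bigr)$, then gives, on an event of probability at least $1-\alpha$,
$$
  \phi(\bar{Y}-\mu)\;\le\;\xv_\veps\,\phi\!\left(\frac{1}{n}\sum_{i=1}^n\veps_i(Y_i-\bar{Y})\right)+\Delta+\frac{2M}{\sqrt n}\sqrt{\log(1/\alpha)},
$$
where $\Delta:=\xv\,\phi(\bar{Y}-\mu)-\xv_Y\xv_\veps\,\phi\bigl(\tfrac1n\sum_i\veps_i(Y_i-\bar{Y})\bigr)$ is a deterministic residual.

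The second ingredient bounds $\Delta$, and here Theorem~\ref{thm:symovern} enters. Apply it with $X_i=Y_i$ and $\psi=\phi$; this is legitimate because $\phi$, being subadditive and dominated by $\norm{\cdot}_p$, is $1$-Lipschitz for $\norm{\cdot}_p$, and is positive homogeneous, so hypothesis~1 of the theorem holds (one reads the Wasserstein distance with respect to the $\ell_p$ metric, which is positive homogeneous). In the form obtained by dividing the inequality of Lemma~\ref{lem:symmet} by $n$ and using positive homogeneity,
$$
  \xv\,\phi(\bar{Y}-\mu)=\xv\,\phi\!\left(\frac{1}{n}\sum_{i=1}^n(Y_i-\mu)\right)\;\le\;\xv\,\phi\!\left(\frac{1}{n}\sum_{i=1}^n\veps_i(Y_i-\mu)\right)+\frac{1}{\sqrt{2n}}W_2(\mu,\mu^-).
$$
Passing from the centering at $\mu$ to the centering at $\bar{Y}$ on the right is effected precisely by the device of \cite{ARLOT2010}, which bounds $\xv\,\phi\bigl(\tfrac1n\sum_i\veps_i(Y_i-\mu)\bigr)$ by $\tfrac{n}{n-1}\xv_Y\xv_\veps\,\phi\bigl(\tfrac1n\sum_i\veps_i(Y_i-\bar{Y})\bigr)$; since $\xv_\veps\phi\bigl(\tfrac1n\sum_i\veps_i(Y_i-\bar{Y})\bigr)\le\xv_\veps\norm*{\tfrac1n\sum_i\veps_i(Y_i-\bar{Y})}_p=O(Mn^{-1/2})$, the excess term $\tfrac{1}{n-1}(\cdots)$ is of strictly lower order, so $\Delta\le(2n)^{-1/2}W_2(\mu,\mu^-)+O(Mn^{-3/2})$, the remainder being absorbed into the coefficient of $W_2$. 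Inserting this into the displayed inequality yields the asserted bound.

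I expect the main difficulty to be organisational rather than conceptual. One must arrange the proof so that a single event of probability $1-\alpha$ carries the whole bound, which is why the bounded-differences inequality is applied to one combined functional of $(Y_1,\ldots,Y_n)$ rather than separately to $\phi(\bar{Y}-\mu)$ and to the empirical Rademacher average, a union bound over the two events degrading $\log(1/\alpha)$ to $\log(2/\alpha)$; and one must check that the $\mu\to\bar{Y}$ correction, the factor $\tfrac{n}{n-1}$, the choice of ground metric for $W_2$, and the McDiarmid constants all assemble into precisely the coefficients shown. Once the architecture of \cite{ARLOT2010}'s proof is reproduced, inserting Theorem~\ref{thm:symovern} and carrying the extra Wasserstein term along is routine.
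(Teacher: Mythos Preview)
Your approach is exactly the one the paper indicates: it provides no formal proof for this proposition, only the sentence preceding it, which says to substitute Theorem~\ref{thm:symovern} for Proposition~2.4 of \cite{ARLOT2010} in their argument. Your write-up is in fact more careful than the paper about the $\tfrac{n}{n-1}$ recentering factor, which the paper silently drops in passing from the \cite{ARLOT2010} display to the new proposition.
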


\subsection{Bounds on empirical processes}

Symmetrization arises when bounding the variance 
of an empirical process.
In \cite{BOUCHERON2013}, the following result 
is stated as Theorem 11.8 and is subsequently proved 
using the original symmetrization inequality resulting
in suboptimal coefficients.

\begin{thm}[\cite{BOUCHERON2013}, Theorem 11.8]
  For $i \in \{1,\ldots,n\}$ and $s\in \mathcal{T}$, a
  countable index set, let $X_i = (X_{i,s})_{s\in\mathcal{T}}$
  be a collection of real valued random variables.  Furthermore, 
  let $X_1,\ldots,X_n$ be independent.  
  Assume $\xv X_{i,s}=0$ and $\abs{X_{i,s}}\le1$ for all $i=1,\ldots,n$
  and for all $s\in\mathcal{T}$.  Defining
  $Z=\sup_{s\in\mathcal{T}}\sum_{i=1}^n X_{i,s}$, then
  $$
    \var{Z} \le 8\xv Z + 2 \sigma^2
  $$
  where $\sigma^2=\sup_{s\in\mathcal{T}}\sum_{i=1}^n \xv X_{i,s}^2$.
\end{thm}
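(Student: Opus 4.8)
The plan is to combine the Efron--Stein inequality \citep{EFRONSTEIN1981,STEELE1986} with symmetrization, following \cite{BOUCHERON2013}. For each $i$ let $X_i'$ be an independent copy of $X_i$ and let $Z_i'$ be $Z$ with $X_i$ replaced by $X_i'$; swapping $X_i$ and $X_i'$ interchanges $Z$ and $Z_i'$ without changing the joint law, so $Z-Z_i'$ is a symmetric random variable and Efron--Stein gives
$$
  \var{Z} \;\le\; \frac12\sum_{i=1}^n\xv\left[(Z-Z_i')^2\right]
  \;=\; \sum_{i=1}^n\xv\left[(Z-Z_i')_+^2\right].
$$

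The second step is to linearise each increment at the maximiser. Let $\hat{s}\in\mathcal{T}$ attain $Z=\sum_{i=1}^n X_{i,\hat{s}}$ (a measurable near-maximiser suffices when the supremum is not attained). Since $Z_i'\ge X_{i,\hat{s}}'+\sum_{j\ne i}X_{j,\hat{s}}$ we have $(Z-Z_i')_+\le (X_{i,\hat{s}}-X_{i,\hat{s}}')_+$, hence $(Z-Z_i')_+^2\le (X_{i,\hat{s}}-X_{i,\hat{s}}')^2$. Taking the conditional expectation in $X_i'$, which is independent of $\hat{s}$ and has $\xv X_{i,s}'=0$ for every $s$, kills the cross term, and $\sum_i\xv X_{i,s}^2\le\sigma^2$ for each fixed $s$, so
$$
  \var{Z} \;\le\; \xv\left[\sum_{i=1}^n X_{i,\hat{s}}^2\right] + \sigma^2
  \;\le\; \xv\left[\sup_{s\in\mathcal{T}}\sum_{i=1}^n X_{i,s}^2\right] + \sigma^2 .
$$
One must resist bounding $X_{i,\hat{s}}^2\le|X_{i,\hat{s}}|$ here; that is too lossy, and the square has to be retained.

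It then remains to prove $\xv\sup_{s}\sum_i X_{i,s}^2\le 8\,\xv Z+\sigma^2$, which with the display above yields the claim. Peeling off the mean, $\sup_s\sum_i X_{i,s}^2\le\sigma^2+\sup_s\sum_i(X_{i,s}^2-\xv X_{i,s}^2)$, so it is enough to control $\xv\sup_s\sum_i(X_{i,s}^2-\xv X_{i,s}^2)$ by $8\,\xv Z$. For this I would (i) apply the classical symmetrization inequality to the centred coordinate process $s\mapsto X_{i,s}^2$ to introduce Rademacher signs at the cost of a factor $2$; (ii) invoke the Ledoux--Talagrand contraction principle \citep{LEDOUXTALAGRAND1991}, using that $x\mapsto x^2$ is Lipschitz on $[-1,1]$ and vanishes at $0$, to trade each $X_{i,s}^2$ for $X_{i,s}$ at the price of its Lipschitz constant; and (iii) use a de-symmetrization step to pass from the Rademacher supremum back to a multiple of $\xv Z$. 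Step (iii), together with the constant bookkeeping across (i)--(iii), is the main obstacle, and it is precisely this slack that produces the factor $8$; as the paper notes, this constant is not optimal, and removing it is the point of the Wasserstein-corrected symmetrization developed in Section~\ref{sec:symres}.
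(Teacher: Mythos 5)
Your argument is exactly the Boucheron--Lugosi--Massart proof that the paper cites rather than reproves: Efron--Stein plus linearization at the maximizer gives $\var{Z}\le\xv\sup_{s\in\mathcal{T}}\sum_{i=1}^nX_{i,s}^2+\sigma^2$, and your steps (i)--(iii) are precisely the symmetrization, contraction, and desymmetrization the text records via the two intermediate bounds $\xv\sup_{s}\sum_iX_{i,s}^2\le\sigma^2+2\xv\sup_{s}\sum_i\veps_iX_{i,s}^2$ and $\xv\sup_{s}\sum_i\veps_iX_{i,s}^2\le4\xv Z$, with the constant $8=2\cdot2\cdot2$ accounted for in the same way. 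The only difference is that you leave (i)--(iii) as a sketch rather than executing them, but the route and the bookkeeping coincide with the paper's.
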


The given proof uses the symmetrization inequality twice 
as well as the contraction inequality 
(see \cite{LEDOUXTALAGRAND1991} Theorem 4.4, and 
\cite{BOUCHERON2013} Theorem 11.6)
to establish the bounds
$$
  \xv \sup_{s\in\mathcal{T}}\sum_{i=1}^n X_{i,s}^2
  \le \sigma^2 +
  2 \xv \sup_{s\in\mathcal{T}} \sum_{i=1}^n \veps_i X^2_{i,s}
  ~~\text{ and }~~
  \xv \sup_{s\in\mathcal{T}} \sum_{i=1}^n \veps_i X^2_{i,s}
  \le 4\xv Z.
$$
Making use of the improved symmetrization inequality cuts the
coefficient of $\xv Z$ by a factor of 4 to the tighter 
$$
  \var{Z} \le 2\xv Z + 2\sigma^2 + O(\sqrt{n}).
$$

Beyond this textbook example of bounding the variance of an empirical process, 
symmetrization arguments are used to construct confidence 
sets for empirical processes in 
\cite{GINENICKL2010ADAPTIVE,LOUNICINICKL2011,KERKYACHARIAN2012,FANPARTIII}.
The coefficients in all of their results can be similarly improved 
using the improved symmetrization inequality.

\subsection{Type, Cotype, and Nemirovski's Inequality}

In the probability in Banach spaces setting, 
let $X_i\in(B,\norm{\cdot})$ for $i=1,\ldots,n$ be a collection
of independent mean zero Banach space valued random variables.  
A collection 
of results referred to as \textit{Nemirovski inequalities} 
\citep{NEMIROVSKI2000,DUMBGEN2010}
are concerned
with whether or not there exists a constant $K$ depending only on the 
norm such that 
$$
  \xv\norm*{\sum_{i=1}^n X_i}^2 \le K\sum_{i=1}^n\norm{X_i}^2.
$$
For example, in the Hilbert space setting, 
orthogonality allows for 
$K=1$ and the inequality can be replaced by an equality.

One such result requires the notion of type and cotype.
A Banach space
$(B,\norm{\cdot})$ is said to be of \textit{Rademacher type $p$} 
for $1\le p<\infty$ (respectively, of \textit{Rademacher cotype $q$}
for $1\le q<\infty$)
if there exists a constant $T_p$ (respectively, $C_q$) such that for all 
finite non-random sequences $(x_i)\in B$ and $(\veps_i)$,
a sequence of independent Rademacher random variables,
$$
  \xv\norm*{\sum_i\veps_i x_i}^p \le 
  T_p^p\sum_i\norm{x_i}^p,~~
  \left(\text{respectively, }
    \sum_i\norm{x_i}^q\le
    C_q^{-q} \xv\norm*{\sum_i\veps_i x_i}^q 
  \right).
$$
These definitions and the original symmetrization inequality 
lead to the following proposition.
\begin{prop}[\cite{LEDOUXTALAGRAND1991} Proposition~9.11, 
\cite{DUMBGEN2010} Proposition~3.1]
  \label{prp:nemiOld}
  Let $X_i\in B$ for $i=1,\ldots,n$ and $S_n=n^{-1}\sum_{i=1}^nX_i$.
  If $(B,\norm{\cdot})$ is of type $p\ge1$ with constant $T_p$
(respectively, of cotype $q\ge1$ with constant $C_q$), then
$$
  \xv\norm{S_n}^p \le (2T_p)^pn^{-p} \sum_{i=1}^n \xv\norm{X_i}^p,~~
  \left(
    \xv\norm{S_n}^q \ge (2C_q)^{-q}n^{-q} \sum_{i=1}^n \xv\norm{X_i}^q
  \right)
$$
\end{prop}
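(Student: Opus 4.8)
The plan is to derive this directly from the defining inequalities of type and cotype together with the classical symmetrization inequality stated in the introduction. The key observation is that Proposition~\ref{prp:nemiOld} is a purely ``first-moment-to-$p$th-moment'' statement, so it does not require the refined Wasserstein correction of Lemma~\ref{lem:symmet} or Theorem~\ref{thm:symovern}; the factor of $2$ is precisely the symmetrization constant.

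\medskip

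\noindent\textbf{Type case.} First I would apply the symmetrization inequality in the form
$$
  \xv\norm*{\frac{1}{n}\sum_{i=1}^n(X_i-\xv X_i)}^p \le
  2^p\,\xv\norm*{\frac{1}{n}\sum_{i=1}^n\veps_i(X_i-\xv X_i)}^p,
$$
which holds for the $p$th power by the same Jensen-plus-random-signs argument (one bounds $\psi(\cdot)=\norm{\cdot}^p$, still convex, by introducing an independent copy $X_i'$, noting $X_i-X_i'\eqdist\veps_i(X_i-X_i')$, and using $\norm{X_i'-\xv X_i'}$ has smaller $p$th moment after conditioning). Since here the $X_i$ are already mean zero, $\xv X_i=0$, and $S_n=n^{-1}\sum X_i$, this reads $\xv\norm{S_n}^p\le 2^p n^{-p}\,\xv_X\xv_\veps\norm{\sum_i\veps_i X_i}^p$. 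Then, conditioning on the $X_i$ and treating them as a fixed sequence, the Rademacher type $p$ inequality gives $\xv_\veps\norm{\sum_i\veps_i X_i}^p\le T_p^p\sum_i\norm{X_i}^p$. Taking expectation over the $X_i$ and absorbing the $T_p^p$ and $2^p$ constants yields
$$
  \xv\norm{S_n}^p \le (2T_p)^p n^{-p}\sum_{i=1}^n\xv\norm{X_i}^p,
$$
as claimed.

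\medskip

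\noindent\textbf{Cotype case.} This is the mirror image. Starting from the cotype $q$ inequality applied conditionally, $\sum_i\norm{X_i}^q\le C_q^{-q}\,\xv_\veps\norm{\sum_i\veps_i X_i}^q$, I take expectation over the $X_i$ and then apply the \emph{reverse} symmetrization bound $\xv_X\xv_\veps\norm{\sum_i\veps_i X_i}^q\le 2^q\,\xv\norm{\sum_i X_i}^q = 2^q n^q\,\xv\norm{S_n}^q$, which follows in the same way as the forward inequality by symmetrizing the Rademacher sum rather than the plain sum (introduce independent copies, use that $\veps_i X_i - \veps_i X_i'\eqdist X_i-X_i'$ after conditioning on the signs, Jensen). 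Rearranging gives $\xv\norm{S_n}^q\ge (2C_q)^{-q} n^{-q}\sum_i\xv\norm{X_i}^q$.

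\medskip

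\noindent The main obstacle, and the only point requiring care, is justifying the $p$th-power (and $q$th-power) versions of symmetrization with the clean constant $2^p$: the statement in the introduction is phrased for the first moment, so I would either cite the general form in \cite{LEDOUXTALAGRAND1991}, Lemma~6.3, or spell out the convexity argument for $\norm{\cdot}^p$ explicitly. Everything else is bookkeeping — Fubini to interchange $\xv_X$ and $\xv_\veps$ (licensed by independence of the $\veps_i$ from the $X_i$), and collecting constants into $(2T_p)^p$ respectively $(2C_q)^{-q}$. No Wasserstein machinery enters; the improved inequality is invoked only in the subsequent discussion to sharpen the constant $2$ down toward $1$ at rate $O(n^{-1/2})$.
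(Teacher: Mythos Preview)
Your proposal is correct and matches the approach the paper indicates: the paper does not spell out a proof of Proposition~\ref{prp:nemiOld} but simply attributes it to \cite{LEDOUXTALAGRAND1991} and \cite{DUMBGEN2010}, noting that ``these definitions and the original symmetrization inequality lead to the following proposition.'' Your argument --- $p$th-power symmetrization (Lemma~6.3 of \cite{LEDOUXTALAGRAND1991}) followed by the conditional type/cotype inequality, and the reverse direction for cotype --- is exactly that derivation, and your remark that no Wasserstein machinery is needed here is on point.
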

The proposition can be refined by applying our improved 
symmetrization inequality along with the Rademacher type $p$
condition if the $X_i$ are additionally norm bounded.  
If the $X_i$ have a common law $\mu$, let 
$W_2 = W_2(\mu,\mu^-)$ be the Wasserstein distance between $\mu$
and its reflection.  

\begin{prop}
  \label{prp:nemiNew}
  Under the setting of Proposition~\ref{prp:nemiOld},
  additionally assume that $\norm{X_i}\le1$ for $i=1,\ldots,n$.
  Then,
  $$
  \xv\norm{S_n}^p \le T_p^pn^{-p} \sum_{i=1}^n \xv\norm{X_i}^p
  + \frac{pW_2}{\sqrt{2n}},~~
  \left(
    \xv\norm{S_n}^q \ge C_q^{-q}n^{-q} \sum_{i=1}^n \xv\norm{X_i}^q
    - \frac{qW_2}{\sqrt{2n}}
  \right)
  $$
\end{prop}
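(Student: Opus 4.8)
The plan is to mimic the proof of the classical Nemirovski-type result (Proposition~\ref{prp:nemiOld}) but substitute the improved symmetrization inequality of Theorem~\ref{thm:symovern} in place of the factor-of-2 symmetrization. First I would set $\psi = \norm{\cdot}^p$ — but note this is not $1$-Lipschitz in general, so the direct application of Lemma~\ref{lem:symmet} is not immediate. The cleaner route is to apply Theorem~\ref{thm:symovern} (or Lemma~\ref{lem:symmet}) with $\psi = \norm{\cdot}$, which \emph{is} $1$-Lipschitz and positive homogeneous, obtaining
$$
  \xv\norm{S_n} \le \xv\norm*{\frac1n\sum_{i=1}^n\veps_i X_i} + \frac{W_2(\mu,\mu^-)}{\sqrt{2n}},
$$
and then raise both sides to the $p$th power. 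Here the norm bound $\norm{X_i}\le 1$ is what lets me pass from the $p$th power of a sum to a manageable expression: by convexity or by the elementary inequality $(a+b)^p \le a^p + p\,b\,(a+b)^{p-1}$ combined with the crude bound $a+b \le$ (something bounded), the cross term is controlled linearly in $W_2$. Since $\norm{X_i}\le 1$ forces $\norm{S_n}\le 1$ and $\norm{n^{-1}\sum\veps_i X_i}\le 1$, the factor $(a+b)^{p-1}$ is at most $1$ (or at most $2^{p-1}$ before normalization — one must track this), so
$$
  \xv\norm{S_n}^p \le \left(\xv\norm*{\tfrac1n\textstyle\sum_i\veps_i X_i}\right)^p + \frac{p\,W_2(\mu,\mu^-)}{\sqrt{2n}} + (\text{lower order}).
$$

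Next I would feed the Rademacher average into the type-$p$ inequality. By Jensen, $\left(\xv\norm*{n^{-1}\sum_i\veps_i X_i}\right)^p \le \xv\norm*{n^{-1}\sum_i\veps_i X_i}^p = n^{-p}\,\xv\norm*{\sum_i\veps_i X_i}^p$, and then conditioning on the $X_i$ and applying the type-$p$ definition to the fixed sequence $(X_i)$ gives $\xv_\veps\norm*{\sum_i\veps_i X_i}^p \le T_p^p\sum_i\norm{X_i}^p$; taking expectation over the $X_i$ yields $n^{-p}T_p^p\sum_{i=1}^n\xv\norm{X_i}^p$. Combining with the previous display produces exactly the stated upper bound with the $\frac{pW_2}{\sqrt{2n}}$ correction (absorbing any residual lower-order term into the constant or verifying it vanishes because of the $\norm{X_i}\le1$ normalization). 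For the cotype direction I would run the \emph{lower} deviation form of Theorem~\ref{thm:symovern}, $\xv\norm{S_n}\ge \xv\norm*{n^{-1}\sum_i\veps_i X_i} - W_2/\sqrt{2n}$, raise to the $q$th power using $(a-b)^q \ge a^q - q\,b\,a^{q-1}$ with $a\le 1$, apply Jensen in the favourable direction together with the cotype inequality $\sum_i\norm{X_i}^q \le C_q^{-q}\,\xv_\veps\norm*{\sum_i\veps_i X_i}^q$, and rearrange.

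The main obstacle is the passage from the $1$-Lipschitz, degree-one estimate on $\xv\norm{S_n}$ to the degree-$p$ estimate on $\xv\norm{S_n}^p$ without picking up a multiplicative constant worse than the $T_p^p$ already present — this is precisely where the hypothesis $\norm{X_i}\le 1$ is essential and must be used carefully to keep the cross term additive and $O(n^{-1/2})$ rather than, say, $O(1)$. A secondary subtlety is the direction of Jensen's inequality: it is benign (in our favour) for the type/upper bound but must be checked for the cotype/lower bound, where one wants $\xv\norm{\cdot}^q \ge (\xv\norm{\cdot})^q$, which again is just Jensen and so is fine. Everything else — the type/cotype application, the binomial-type inequalities, the final algebra — is routine once the normalization bookkeeping is pinned down.
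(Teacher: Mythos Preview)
Your route via $\psi=\norm{\cdot}$ and then raising to the $p$th power has a Jensen gap that cannot be closed. For the type bound you end up controlling $(\xv\norm{S_n})^p$, but the target is $\xv\norm{S_n}^p$, and Jensen gives $(\xv\norm{S_n})^p \le \xv\norm{S_n}^p$ --- the wrong direction. Your remark that Jensen is ``benign for the type/upper bound'' applies only on the Rademacher side (passing from $(\xv\norm{R_n})^p$ up to $\xv\norm{R_n}^p$ before invoking type), not on the $S_n$ side where it is needed. The cotype half has the mirror problem: Jensen on $S_n$ is indeed favourable, but you then require $(\xv\norm{R_n})^q \ge C_q^{-q}n^{-q}\sum_i\xv\norm{X_i}^q$, whereas the cotype inequality only delivers $\xv\norm{R_n}^q \ge C_q^{-q}n^{-q}\sum_i\xv\norm{X_i}^q$, and again $(\xv\norm{R_n})^q \le \xv\norm{R_n}^q$ points the wrong way.

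The paper's argument is the one you dismissed in your first sentence: take $\psi=\norm{\cdot}^p$ directly. The hypothesis $\norm{X_i}\le 1$ (together with $\xv X_i=0$) forces both $S_n$ and $n^{-1}\sum_i\veps_i X_i$ to lie in the closed unit ball, and on that set $\norm{\cdot}^p$ is $p$-Lipschitz, since for $\norm{x},\norm{y}\le 1$ one has $\bigl|\norm{x}^p-\norm{y}^p\bigr|\le p\max(\norm{x},\norm{y})^{p-1}\bigl|\norm{x}-\norm{y}\bigr|\le p\norm{x-y}$. Hence $\psi/p$ is $1$-Lipschitz on the support of the two laws, and Theorem~\ref{thm:symovern} yields in one stroke
\[
  \xv\norm{S_n}^p \;\le\; \xv\norm*{\tfrac{1}{n}\sum_{i=1}^n\veps_i X_i}^p + \frac{pW_2}{\sqrt{2n}},
\]
after which conditioning on the $X_i$ and applying the type-$p$ inequality finishes; the cotype half is identical via the lower deviation. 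So the purpose of $\norm{X_i}\le 1$ is not to tame a binomial cross term but to make $\norm{\cdot}^p$ Lipschitz on the relevant domain.
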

\begin{proof}
  In the context of Theorem~\ref{thm:symovern}, set 
  $\psi(\cdot) = \norm{\cdot}^p$.  Given the bound $\norm{X_i}\le1$,
  we have that $\norm{\psi}_{Lip}=p$.  Scale by $p$, and the first 
  result follows.
\end{proof}

Note that for identically distributed $X_i\in B$, 
the order of the original bound for a type $p$ Banach space 
is $O(n^{1-p})$ while the Wasserstein correction term is $O(n^{-1/2})$.
This correction will give an obvious benefit for spaces of type $p<3/2$.  
However, even for spaces 
of type 2, the new bound can be tighter specifically in the 
high dimensional setting when $d\gg n$.  Indeed, consider
$\ell_\infty(\real^d)$, which is discussed in particular 
in Section~3.2 of 
\cite{DUMBGEN2010} where it is shown
to be of type 2 with constant $T_p=\sqrt{2\log(2d)}$.  For 
iid $X_i\in\ell_\infty(\real^d)$, the bounds to compare
are
$$
  \frac{8\log(2d)}{n} \xv\norm{X_i}_\infty^2
  ~~~\text{ and }~~~
  \frac{2\log(2d)}{n} \xv\norm{X_i}_\infty^2 + 
  \sqrt{\frac{2}{n}}W_2(\mu,\mu^-).
$$
Figure~\ref{fig:nemiBeta} displays such a comparison for 
$n=10$, $d\in\{5,25,50\}$, and iid 
$X_{i,j}+\alpha/(1+\alpha)\dist\distBeta{\alpha}{1}$
for $i=1,\ldots,n$ and $j=1,\ldots,d$.  
Hence, the $X_i$ are Beta random variables that are shifted 
to have zero mean.
$W_2(\mu,\mu^-)$ is approximated
by $\xv W_2(\mu_5,\mu_5^-)$, which is computed via the bootstrap
procedure outlined in Section~\ref{sec:symcomp}.  
The new bound can be seen to have better performance than the old
one specifically in the cases of $d=25$ and $d=50$ when $\alpha$ 
is not too large.

\begin{figure}
  \begin{center}
    \includegraphics[width=\textwidth]{\PICDIR/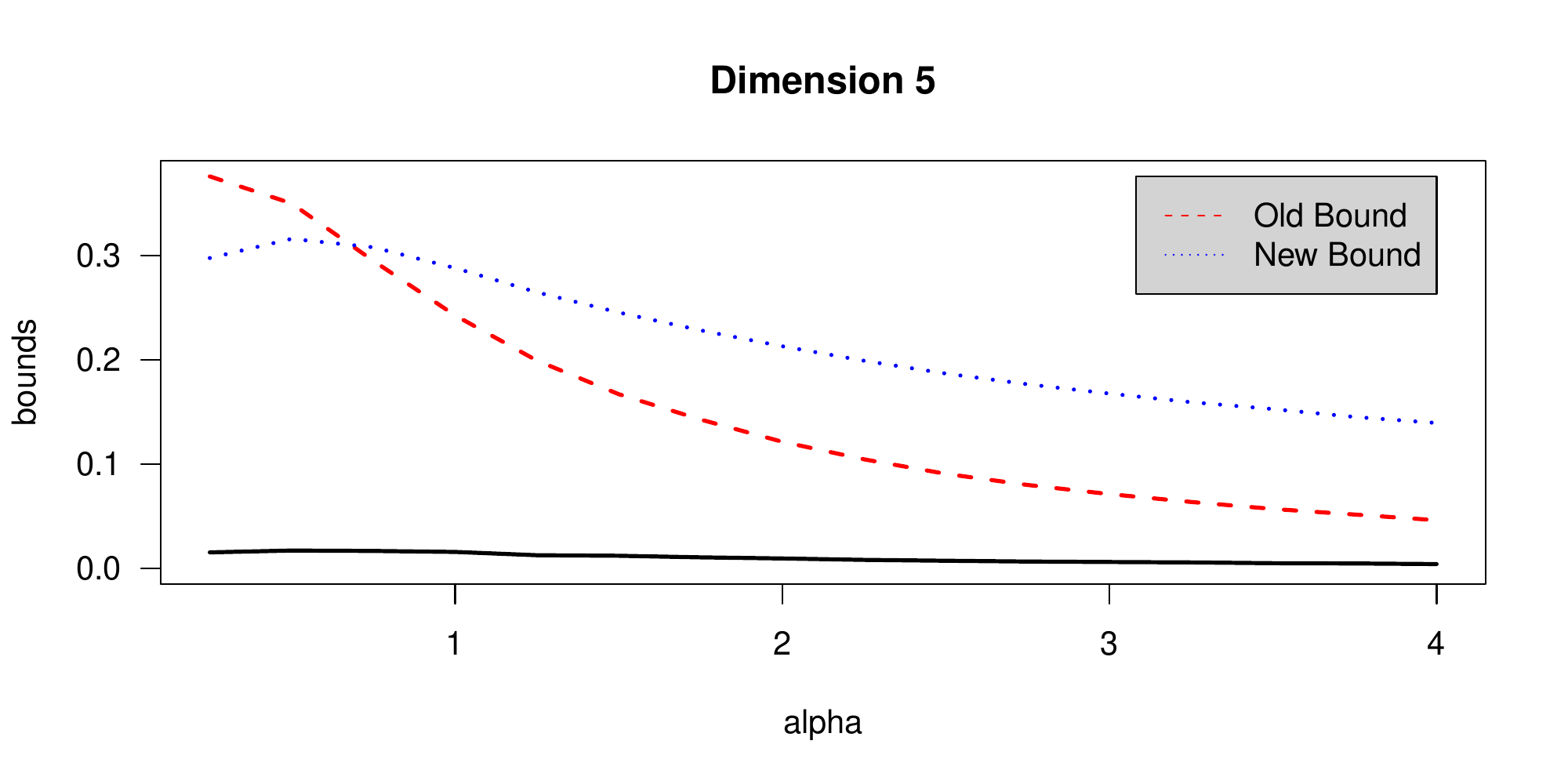}
    \vspace{-0.5in}

    \includegraphics[width=\textwidth]{\PICDIR/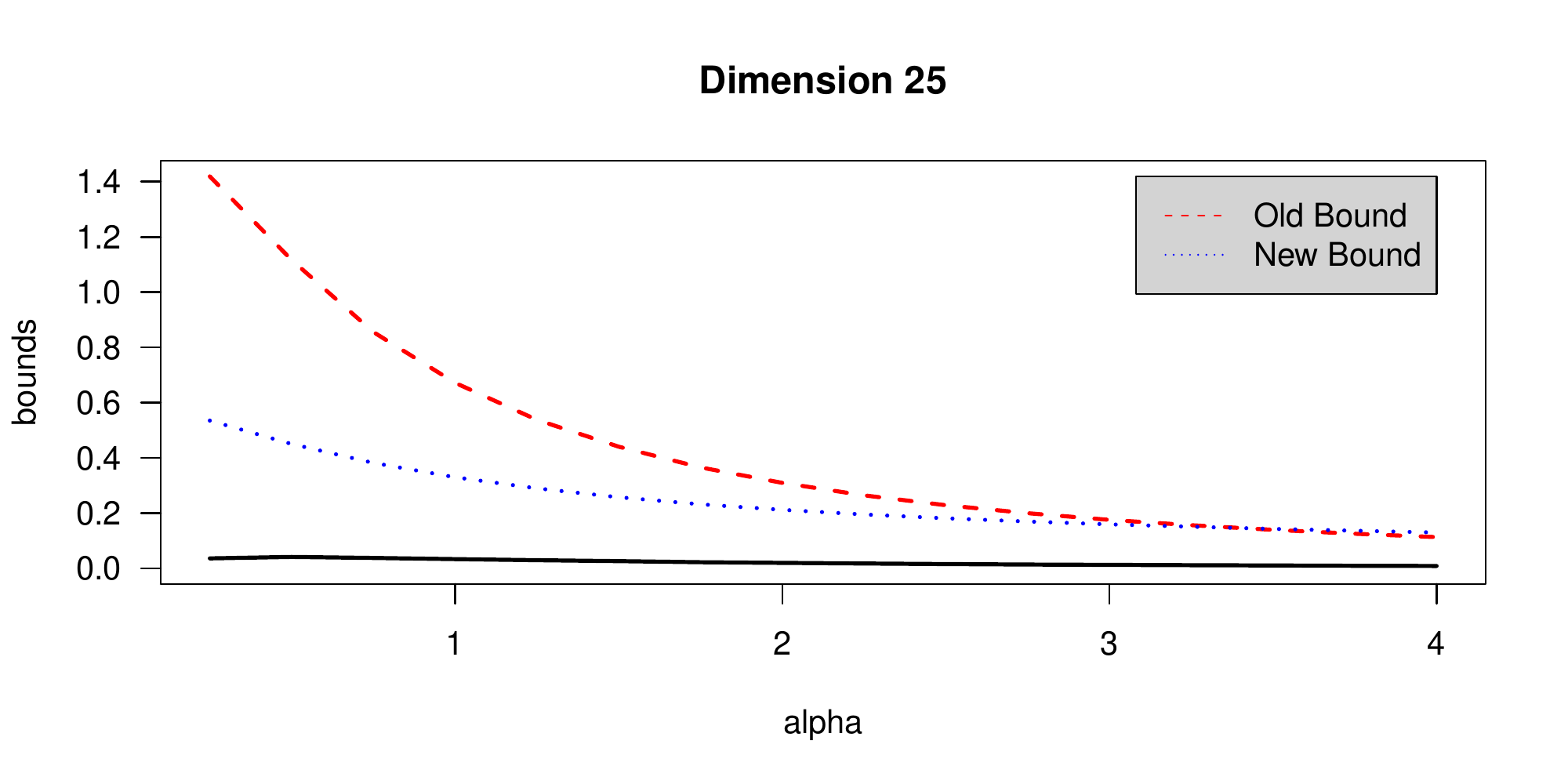}
    \vspace{-0.5in}

    \includegraphics[width=\textwidth]{\PICDIR/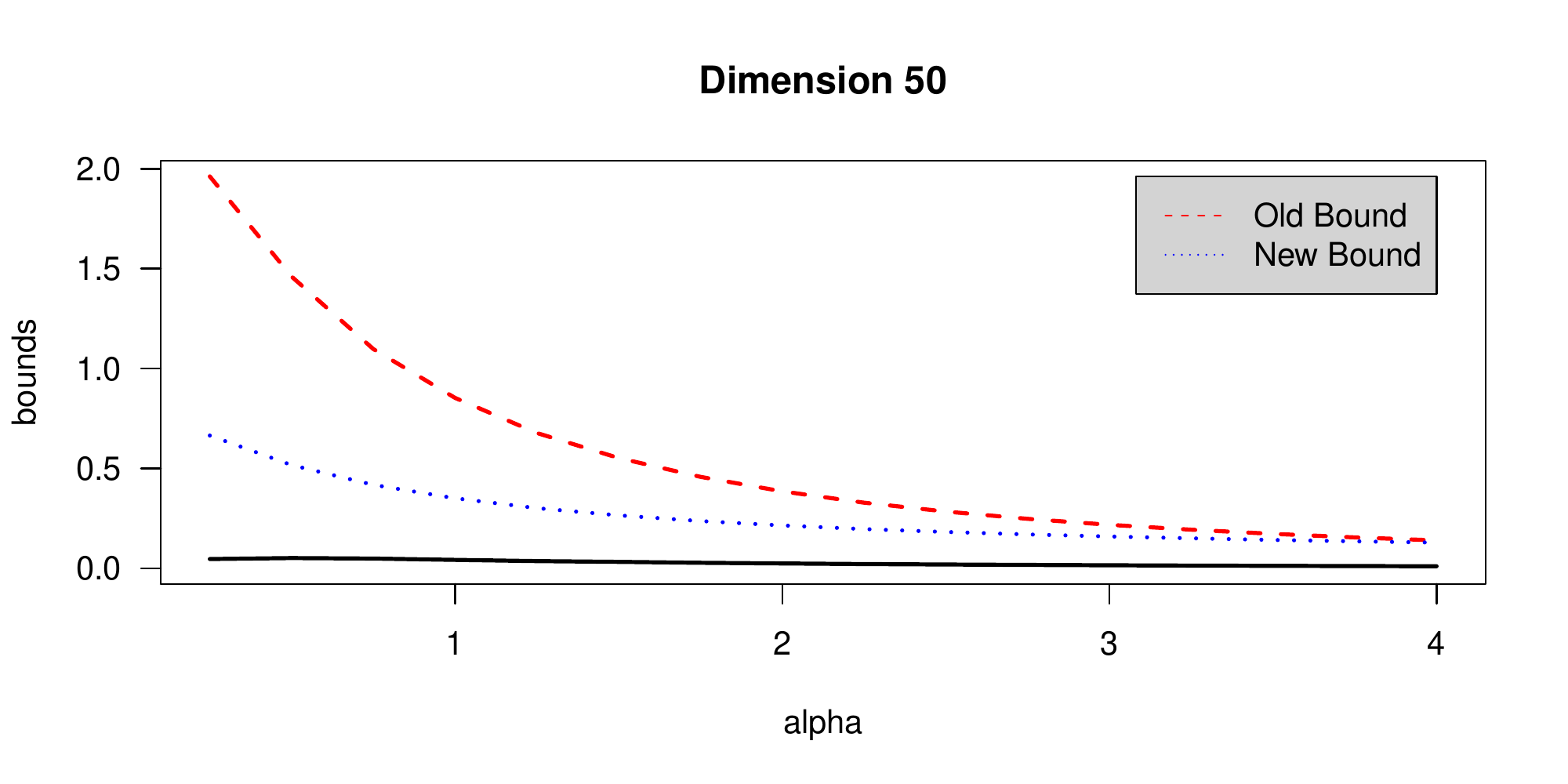}
  \end{center}
  \capt{
    \label{fig:nemiBeta}
    A comparison of the old bound from Proposition~\ref{prp:nemiOld}, 
    the red dashed line,
    and the new bound from Proposition~\ref{prp:nemiNew}, the
    blue dotted line, for
    a sample $n=10$ $X_i\in\ell_\infty(\real^d)$ for dimensions 
    $d\in\{5,25,50\}$.  Each $X_i=(X_{i,1},\ldots,X_{i,d})$ where
    each $X_{i,j}+\alpha/(1+\alpha)\distiid\distBeta{\alpha}{1}$. 
    The solid black line indicates the left
    hand side in the two propositions of $\xv\norm{S_n}_\infty^2$.
  }
\end{figure}

\subsubsection{A Nemirovski variant with weak variance}

As one further example of improved symmetrization,
a variation of 
Nemirovski's inequality found in 
Section 13.5 of \cite{BOUCHERON2013} 
is proved via a similar symmetrization argument
for the $\ell_p$ norm with $p\ge1$.  
Let $X_1,\ldots,X_n\in\real^d$ be independent mean zero 
random variables.  
Let $B_q=\{x\in\real^d:\norm{x}_q\le1\}$, and define the 
weak variance 
$\Sigma_p^2 = n^{-2}\xv\sup_{t\in B_q}\sum_{i=1}^n\iprod{t}{X_i}^2$.
The resulting inequality is
$$
  \xv\norm{S_n}^2_p \le 578 d \Sigma_p^2.
$$
Replacing the old symmetrization inequality with the improved
version reduces the coefficient of 578 roughly by a factor of 4
resulting in
$$ 
  \xv\norm{S_n}^2_p \le 146 d \Sigma_p^2 + O(n^{-1/2}).
$$

\section{Discussion}

The symmetrization inequality is a fundamental result 
for probability in Banach spaces, concentration inequalities,
and many other related areas.  However, not accounting 
for the amount of asymmetry in the given random variables
has led to pervasive powers of two throughout derivative 
results.  Our improved symmetrization inequality 
incorporates such a quantification of asymmetry through 
use of the Wasserstein distance.  Besides being theoretically
sound, it is shown in simulations to provide a tightness 
superior to that of the original result.
Going beyond the 
inequality itself, this Wasserstein distance offers 
a novel and powerful way to analyze the symmetry of random variables 
or lack thereof.  It can and should be applied to countless other 
results that were not considered in this current work.

\appendix
\section{Past results used}

\begin{lm}[
    Kantorovich-Rubinstein Duality,
    see \cite{VILLANI2008OPTTRANS}
  ]
  \label{lem:duality}
  Under the setting of Definition~\ref{def:wasser},
  $$
    W_1(\mu,\nu) = \sup_{\norm{\phi}_{Lip}\le 1}\left\{
      \int_{\mathcal{X}} \phi d\mu -
      \int_{\mathcal{X}} \phi d\nu
    \right\}.
  $$
\end{lm}

\begin{lm}
  \label{lem:order}
  Under the setting of Definition~\ref{def:wasser},
  for $p<q$,
  $$
    W_p(\mu,\nu) \le W_q(\mu,\nu).
  $$
\end{lm}
\begin{proof} Jensen or \holders Inequality \end{proof}

\begin{lm}[
    Convolution property of $W_2$, 
    see \cite{BICKELFREEDMAN1981}
  ]
  \label{lem:conv}
  For Hilbert space valued random variables $X_i$ with law $\mu_i$ 
  and $Y_i$ with law $\nu_i$ for $i=1,\ldots,n$, 
  define $\mu^{*n}$ to be 
  the law of $\sum_{i=1}^n X_i$ and similarly for $\nu^{*n}$.
  Then,
  $$
    W_2^2(\mu^{*n},\nu^{*n}) \le 
    \sum_{i=1}^n W_2^2(\mu_i,\nu_i).
  $$
\end{lm}

\begin{lm}[  
    Convergence of Empirical Measure,
    see \cite{BICKELFREEDMAN1981}
  ] 
  \label{lem:converge}
  Let $X_1,\ldots,X_n$ be \iid Banach space valued random variables
  with common law $\mu$.  Let $\mu_n$ be the empirical distribution
  of the $X_i$.  Then,
  $$
    W_p(\mu_n,\mu) \rightarrow 0,~~\text{as }n\rightarrow\infty.
  $$
\end{lm}

% Bibliography

\bibliographystyle{plainnat}
\bibliography{kasharticle,kashbook}

%%%%%%%%%%%%%%%%
\end{document}